\documentclass[11pt]{article}

\usepackage{setspace}
\usepackage[a4paper,margin=1in]{geometry}
\usepackage[T1]{fontenc}
\usepackage{manish}
\usepackage[utf8]{inputenc}
\usepackage{libertinus}
\usepackage{amsmath,amssymb,amsthm,mathtools}
\usepackage{bbm}
\usepackage{xcolor}

\usepackage{enumitem}
\usepackage[numbers,sort&compress]{natbib}
\usepackage{hyperref}
\usepackage[nameinlink,capitalize,noabbrev]{cleveref}
\usepackage{comment}
\usepackage[normalem]{ulem}
\usepackage{setspace}

\hypersetup{
    colorlinks=true,
    linkcolor=blue,
    citecolor=blue,
    urlcolor=blue
}

\theoremstyle{plain}

\newcommand\blfootnote[1]{%
  \begingroup
  \renewcommand\thefootnote{}\footnote{#1}%
  \addtocounter{footnote}{-1}%
  \endgroup
}

\author{Souvik Dhara\thanks{H. Milton Stewart School of Industrial \& Systems Engineering, Georgia Institute of Technology. {\tt sdhara@gatech.edu}}, \ Manish Pandey\thanks{Department of Mathematics,  Aarhus University. {\tt m.pandey@math.au.dk}}, \ and \ Leonard J. Schulman\thanks{California Institute of Technology, Pasadena CA 91125, USA. 
                {\tt schulman@caltech.edu.} Supported in part by NSF grant CCF-2321079 and DARPA Agreement HR0011-25-3-0213.}}
\date{}

\title{Cascade Surviving Cores via Local Limits}

\begin{document}

\maketitle

\begin{abstract}
We consider a general class of cascading processes on marked networks, in which vertices are iteratively deleted according to a local vertex property. Under marked local--weak convergence, we establish a necessary and sufficient condition for the proportion of vertices in the final stable set to converge to the survival probability in the limiting network. Under this condition, we also obtain convergence of the proportions of vertices in subsets specified by local properties.

\end{abstract}
\blfootnote{\textbf{Acknowledgments.} 
SD and MP would like to acknowledge the summer internship program at the Edwardson School of Industrial Engineering, Purdue University, where a part of the work was done. 
}\blfootnote{\textbf{AI disclosure:} AI tools were used solely for minor language polishing and did not contribute to the technical or conceptual content of this work.}Consider a graph $G=(V,E)$ in which each vertex is assigned a mark from some Polish space. Each vertex is endowed with a \emph{cascading property} that depends on its own mark together with the marks of vertices observed in its local neighborhood. A cascading process is an iterative deletion procedure in which all vertices that do not satisfy the cascading property are removed. Such deletions may in turn cause additional vertices to violate the cascading property, leading to further deletions in subsequent steps. This induces an iterative vertex-deletion process that continues until stability is reached, namely, until every vertex in the residual set satisfies the cascading property. Our primary interest lies in the convergence properties of the functionals of the final residual set, and of its largest connected components.

Numerous related problems fall within this general framework. For example, the \emph{$k$-core problem}, which involves iteratively deleting vertices of degree at most $k-1$, has been studied extensively in random graphs~\cite{pittel1996sudden,riordan2008k,fernholzgiant,janson2008asymptotic,Luc91}. 
Related cascading procedures, including unit clause propagation for Boolean satisfiability, are studied within the Warning Propagation framework~\cite{cooley2021warning}.
Our treatment of the largest surviving component is motivated by, and analogous to, the classical \emph{giant component problem}~\cite{Draief;Massoulie:2009,Jackson:2010,Hofstad:2017,Hofs24}.
More general cascading processes have appeared in recent works in economic theory~\cite{EGL22,elliott2022networks}, where the network and vertex marks model interdependencies among units across different sectors of the economy, and the cascading process represents sequential failures triggered by disruptions in the system.

The technical framework is motivated by the recent work of van der Hofstad~\cite{van2021giant}, which established necessary and sufficient conditions under which Benjamini--Schramm local--weak convergence~\cite{BenSch01,AldSte04} implies concentration and uniqueness of the giant component. The main contribution of this work is to establish an analogous result for a broad class of cascading processes resulting from local vertex properties on sparse random networks.

\section{Preliminaries}

\paragraphown{Marked local--weak convergence.}
Fix a Polish mark space $(\mathscr{M},d_{\mathscr{M}})$. A \emph{marked, rooted, locally finite graph} is a triple
\[
(G,o,\sigma)\equiv \bigl(G=(V,E),\,o\in V,\,\sigma:V\to\mathscr{M}\bigr),
\]
where $G$ is connected and each of its vertices has finite degree. We identify two such triples if there  exists an isomorphism that preserves the root and marks, and write $\Gm$ for the corresponding space of equivalence classes. For $r\in\mathbb{N}$, let
\[
\pi_r(G,o,\sigma):=(G,o,\sigma)^{\le r}
:=\bigl(G[B_r(G,o)],\,o,\,\sigma|_{B_r(G,o)}\bigr)
\]
denote the rooted marked radius-$r$ neighborhood, where $B_r(G,o)$ denotes the set of vertices within graph distance $r$ from $o$ in $G$, and $G[U]$ denotes the induced subgraph on $U$. 
The marked neighborhood topology on $\Gm$ compares finite rooted balls through root-preserving graph isomorphisms under which corresponding marks are close in $d_{\mathscr M}$; see \cite{Hofs24} for a metric formulation and the standard properties, including separability and completeness.
When $G$ is disconnected, the notation $(G,o,\sigma)$ refers to the connected component containing $o$, rooted at $o$ and equipped with the restricted marks. For a finite marked graph $(G_n,\sigma_n)$, its empirical rooted neighborhood measure is
\[
\widehat{\mu}_{G_n}:=\frac1{|V(G_n)|}\sum_{v\in V(G_n)}\delta_{(G_n,v,\sigma_n)}\in\mathcal{P}(\Gm),
\] 
where $\mathcal{P}(X)$ denotes the set of Borel probability measures on $X$, and $\delta_{\cdot}$ denotes the Dirac mass. 
When $G_n$ is a random graph, $\widehat{\mu}_{G_n}$ is a $\mathcal{P}(\Gm)$-valued random variable. We say that a sequence of marked random graphs $(G_n,\sigma_n)_{n\geq 1}$ converges to $\mu\in\mathcal{P}(\Gm)$ in
\emph{marked local--weak convergence in probability} if, for every bounded
continuous function $f:\Gm\to\mathbb{R}$,
\[
\left\langle \widehat{\mu}_{G_n},f\right\rangle
\xrightarrow{\scriptscriptstyle\prob}
\left\langle \mu,f\right\rangle.
\]
This is equivalent to having $\hat{\mu}_{G_n} (A) \xrightarrow{\scriptscriptstyle\prob} \mu (A)$ for all $\mu$-continuity sets $A$.

\paragraphown{Cascade and Surviving core.}
A \emph{vertex property} is a measurable map $P$ assigning to each marked graph $(G,m)$ and vertex $v\in V(G)$ a value $P(v;G,m)\in\{0,1\}$. We say that \(v\) \emph{satisfies \(P\)} in \((G,m)\) if \(P(v;G,m)=1\), and that \(v\) \emph{fails \(P\)} in \((G,m)\) if \(P(v;G,m)=0\). We often write \(v\) satisfies or does not satisfy $P$ in $G$ when the marks are clear from context.

Fix such a property $P$ and a marked graph $(G,m)$. Set $V_0:=V(G)$ and $(G_0,m_0):=(G,m)$, and define,
for $t\ge0$,
\[
V_{t+1}:=\{v\in V(G_t):P(v;G_t,m_t)=1\},\qquad
G_{t+1}:=G_t[V_{t+1}],\qquad
m_{t+1}:=m_t|_{V_{t+1}}.
\]
The \emph{cascade surviving core}, or \emph{$P$-core}, is
\[
G_{\infty} = G_{\infty}(G,m;P):= G\bigg[\bigcap_{t\ge0}V_t\bigg]. 
\]
Thus, at each step $t$, all vertices failing to satisfy $P$ are deleted synchronously. The synchronous deletion gives a canonical final core.

\paragraphown{Local Cascade Survival.} For an integer $r\ge0$, a property $P$ is called \emph{$r$-local} if, for every vertex~$v$, $P(v;G,m)$ depends only on the rooted marked $r$-neighborhood of $v$. Throughout the main results, $P$ is assumed to be $r$-local for some fixed $r\ge1$.

For a marked graph $(G,m)$, let $(V_t(G,m;P))_{t\ge0}$ be the synchronous
cascade process defined above. 
We say that $v$ \emph{survives through $R$ deletion rounds} if \(v\in V_R(G,m;P).\)

\begin{lemma}[Locality of finite-round survival]
\label{lem:finite-round-locality}
If $P$ is $r$-local, then whether $v$ belongs to $V_R(G,m;P)$ or not can be determined by the rooted marked radius-$rR$ neighborhood of $v$.
\end{lemma}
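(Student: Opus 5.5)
We argue by induction on $R$, proving a slightly stronger statement that fits the induction: for every $R\ge 0$ and every pair of marked graphs $(G,m)$, $(G',m')$ with vertices $v\in V(G)$, $v'\in V(G')$, if there is a root- and mark-preserving isomorphism $\phi$ from $(G,v,m)^{\le rR}$ onto $(G',v',m')^{\le rR}$, then $v\in V_R(G,m;P)$ if and only if $v'\in V_R(G',m';P)$. Moreover, $\phi$ maps $V_R(G)\cap B_{rR}(G,v)$ restricted to vertices that remain "decidable" appropriately onto the corresponding set in $G'$. Precisely, the inductive hypothesis we carry is: for every $u\in B_{rR-rs}(G,v)$ with $0\le s\le R$, we have $u\in V_s(G)$ iff $\phi(u)\in V_s(G')$. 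The lemma is the case $s=R$, $u=v$. The base case $R=0$ is trivial since $V_0$ is the full vertex set.

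For the inductive step, fix $s\ge 0$ with $s+1\le R$ and $u\in B_{rR-r(s+1)}(G,v)$. By definition, $u\in V_{s+1}(G)$ iff $u\in V_s(G)$ and $P(u;G_s,m_s)=1$. The first condition transfers by the hypothesis at level $s$, since $B_{rR-r(s+1)}\subseteq B_{rR-rs}$. For the second, $r$-locality says $P(u;G_s,m_s)$ depends only on $(G_s,u,m_s)^{\le r}$. The key geometric fact is that $B_r(G_s,u)\subseteq B_r(G,u)\subseteq B_{rR-rs}(G,v)$, since distances in the induced subgraph $G_s$ dominate distances in $G$. Every vertex $w$ of $B_r(G,u)$ lies in $B_{rR-rs}(G,v)$. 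Hence, by the hypothesis at level $s$, $w\in V_s(G)$ iff $\phi(w)\in V_s(G')$. Because $\phi$ is a graph isomorphism on $B_{rR}$ balls, it restricts to an isomorphism of induced subgraphs $G[B_r(G,u)\cap V_s(G)]\to G'[B_r(G',\phi(u))\cap V_s(G')]$ preserving marks. We also need $\phi(B_r(G,u))=B_r(G',\phi(u))$, which holds because $\phi$ is an isomorphism of balls of radius $rR\ge rR-rs+r$ around the roots, so it preserves distances within radius $r$ of $u$. Taking the component of $u$ within $r$ steps in the induced graph then gives $(G_s,u,m_s)^{\le r}\cong(G'_s,\phi(u),m'_s)^{\le r}$, so the values of $P$ agree.

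The main obstacle is the bookkeeping in this last step. We must check that paths of length at most $r$ in $G_s$ from $u$ stay inside the region where the isomorphism and the level-$s$ hypothesis are valid. This reduces to the triangle inequality $d_G(v,w)\le d_G(v,u)+r\le rR-rs$. We must also ensure that $\phi$ preserves distances in $G$ up to radius $rR$, including paths that might leave the ball. Any path of length at most $r$ from $u$ stays within distance $rR-rs\le rR$ of $v$, so it lies inside the ball where $\phi$ is an isomorphism, and the argument closes.
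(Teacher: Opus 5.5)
Your proof is correct and is essentially the paper's argument: both rest on the fact that one deletion round enlarges the dependency radius by $r$, because $B_r(G_s,u)\subseteq B_r(G,u)$ and $P$ is $r$-local. You fix an isomorphism $\phi$ and induct over the round index $s$ on shrinking balls $B_{r(R-s)}(G,v)$, whereas the paper inducts on $R$ and re-roots at each $u\in B_r(G,v)$; your version also makes explicit the distance-preservation check that the paper leaves implicit. Two harmless slips: the induction is on $s$ with base case $s=0$, not on $R$; and the inequality $rR\ge rR-rs+r$ fails at $s=0$, but what you actually need is $d_G(v,u)+r\le r(R-s)\le rR$, which you state correctly in your last paragraph.
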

\begin{proof}
We argue by induction on $R$. The assertion is trivial for $R=0$. Suppose it
holds for $R$. To decide whether $v$ survives round $R+1$, it is enough to know
which vertices in the radius-$r$ neighborhood of $v$ survive the first $R$
rounds. For each such vertex $u$, the induction hypothesis requires only
$B_{rR}(G,u)$, and $B_{rR}(G,u)\subseteq B_{r(R+1)}(G,v).$
Thus the radius-$r(R+1)$ neighborhood of $v$ determines whether
$v\in V_{R+1}(G,m;P)$.
\end{proof}

\paragraphown{Sequential Core and Maximum Stable Subgraph.}
We now show that, for a class of properties called monotonically non-decreasing properties, the surviving core is the same whether vertices are deleted synchronously or one at a time according to a given rule. Although this framework is not needed for the main results, it is useful for describing cascades that occur sequentially, for example, when the vertices get deleted after exponentially distributed times.

For marked graphs $(G,m)$ and $(H,\widetilde m)$, write $(G,m)\preceq (H,\widetilde m)$ if $G$ is an induced subgraph of $H$ and $\widetilde m|_{V(G)}=m$. We call $P$ \emph{monotone non-decreasing} (MND) if
\[
(G,m)\preceq (H,\widetilde m)\ \text{ and }\ P(v;G,m)=1
\quad\Longrightarrow\quad
P(v;H,\widetilde m)=1.
\]
For example, the property $P(v;G,m)=\mathbbm{1}_{\{\deg_G(v)\ge k\}}$, which gives rise to the $k$-core, is MND. The lemma below connects all the above concepts to the cascade surviving core $G_{\infty}$:

An induced subgraph \(H \subseteq G\) is called \emph{a stable subgraph} if
\(
P(v;H,m|_{V(H)})=1, \text{ for every }v\in V(H).
\)
We call a stable induced subgraph $H^\star$ \emph{a maximum stable subgraph with respect to inclusion}
if every stable induced subgraph of $G$ is contained in $H^\star$. Such a subgraph, if it exists, is unique.

Next, let $(x_k)_{k\ge0}$ be any sequence of vertices in which every vertex
of $G$ occurs infinitely often. 
The \emph{sequential core} $W_\infty$ is then obtained as follows: Set $W_0:=V(G)$ and define
\[
W_{k+1}:=
\begin{cases}
W_k\setminus\{x_k\},&\text{if }x_k\in W_k\text{ and }
P(x_k;G[W_k],m|_{W_k})=0,\\
W_k,&\text{otherwise}.
\end{cases}
\]
Let $W_{\infty}:= \cap_{k\geq 0} W_k$. Below we connect these concepts to the cascade surviving core $G_{\infty}$:
\begin{lemma}[Maximum stable subgraph and order independence]
\label{lem:maximal-core}
Let $(G,m)$ be such that $G$ is locally finite with a finite or countable vertex set, and let $P$ be
MND and $r$-local for some $r\ge1$. 
Then $G_\infty=G_\infty(G,m;P)$ is the maximum stable subgraph with respect to inclusiono. Moreover, $V(G_\infty) = W_\infty$, irrespective of the choice of $(x_k)_{k\geq 0}$.
\end{lemma}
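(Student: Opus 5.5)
The plan is to prove three facts in order, all resting on the MND property:
\begin{enumerate}
\item[(i)] every stable induced subgraph $H$ satisfies $V(H)\subseteq V_t$ for all $t$;
\item[(ii)] $G_\infty$ is itself stable;
\item[(iii)] $W_\infty$ is sandwiched between $V(G_\infty)$ and the vertex set of some stable subgraph.
\end{enumerate}
Maximality together with (i) and (iii) then forces equality, and uniqueness of the maximum stable subgraph is immediate from its definition.

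For (i), we argue by induction on $t$. Suppose $V(H)\subseteq V_t$. Then $H\preceq (G_t,m_t)$, since both are induced subgraphs of $G$ carrying the restricted marks. For $v\in V(H)$ we have $P(v;H,m|_{V(H)})=1$, so MND gives $P(v;G_t,m_t)=1$, that is, $v\in V_{t+1}$.

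For (ii), fix $v\in V(G_\infty)$ and use $r$-locality together with local finiteness. The ball $B_r(G,v)$ is finite. Since $V_t$ is decreasing, there is a finite $T$ such that $V_t\cap B_r(G,v)=V(G_\infty)\cap B_r(G,v)$ for all $t\ge T$. We must compare the rooted $r$-neighbourhood of $v$ in $G_t$ with that in $G_\infty$. Distances in the induced subgraph $G_t$ are computed only along paths inside $V_t$. Any vertex within $G_t$-distance $r$ of $v$ lies in $B_r(G,v)$, because distances only increase in subgraphs. Moreover, a path of length at most $r$ from $v$ stays inside $B_r(G,v)$. Hence for $t\ge T$ the $r$-ball of $v$ in $G_t$, with its marks, is exactly the $r$-ball of $v$ in $G_\infty$. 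Since $v\in V_{T+1}$, we get $P(v;G_T,m_T)=1$, and $r$-locality transfers this to $P(v;G_\infty,m|_{V(G_\infty)})=1$. So $G_\infty$ is stable, and by (i) it is the maximum stable subgraph. This stabilization-of-balls step is the main technical point: it is where local finiteness and $r$-locality are both essential, since without them the limit of the $V_t$ need not be stable.

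For (iii), the first half is the inclusion $V(G_\infty)\subseteq W_k$ for all $k$, again by induction. If $x_k\in V(G_\infty)\subseteq W_k$, then stability of $G_\infty$ together with MND (using $G_\infty\preceq G[W_k]$) gives $P(x_k;G[W_k],\cdot)=1$, so $x_k$ is never deleted. The second half is that $G[W_\infty]$ is stable. Take $v\in W_\infty$ and choose $K$ so that $W_k\cap B_r(G,v)=W_\infty\cap B_r(G,v)$ for all $k\ge K$, which is possible by the same finiteness argument. Pick $k\ge K$ with $x_k=v$; this exists because $v$ occurs infinitely often. Since $v$ was not deleted at step $k$, we have $P(v;G[W_k])=1$. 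The same ball-comparison as in (ii) then yields $P(v;G[W_\infty])=1$. Thus $G[W_\infty]$ is stable, so $W_\infty\subseteq V(G_\infty)$ by maximality, and combined with the reverse inclusion this gives equality for every enumeration $(x_k)_{k\ge0}$.
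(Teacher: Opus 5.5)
Your proposal is correct and follows essentially the same route as the paper. You use MND-induction to place every stable subgraph inside each $V_t$ and $V(G_\infty)$ inside each $W_k$, and you use stabilization of the finite ball $B_r(G,v)$ together with $r$-locality to show that both $G_\infty$ and $G[W_\infty]$ are stable. The only differences are cosmetic: you prove stability of $G[W_\infty]$ directly, by taking a visit time $k\ge K$ at which $v$ survives, where the paper argues by contradiction, and you spell out the ball comparison between $G_t$ and $G_\infty$ more explicitly.
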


\begin{proof} First, every vertex in $V(G_\infty)$ satisfies $P$ in $G_\infty$. Indeed,
for fixed $v\in V(G_\infty)$, local finiteness ensures that $B_r(G,v)$ is finite. Hence
all vertices of this ball that are deleted by the synchronous process are
deleted by some common finite time $T>0$. The rooted radius-$r$ neighbourhood of
$v$ therefore remains unchanged after $T$, and $B_r(G_{T+1},v)$ agrees with $B_r(G_\infty,v)$. 
If
$v$ failed $P$ in $G_\infty$, $r$-locality would imply that it failed $P$ at
some finite stage, contradicting $v\in V(G_\infty)$.

Now let $H\subseteq G$ be stable.  If $H\subseteq G_t$, then \((H,m|_{V(H)})\preceq(G_t,m_t)\), so MND implies that every vertex of $H$ satisfies $P$ in
$G_t$. Thus $H\subseteq G_{t+1}$, and induction gives $H\subseteq G_\infty$.
This proves the first part.

For the process $(W_k)_{k\ge0}$, we first show by
induction that $V(G_\infty)\subseteq W_k$ for every $k\ge0.$ 
The claim is immediate for \(k=0\). Suppose it holds for some \(k\).
If 
\(x_k\in V(G_\infty)\), then \(x_k\) satisfies \(P\) in \(G_\infty\).
Since $P$ is MND, $x_k$ must satisfy $P$ in $G[W_k]$ too. 
Thus \(x_k\) is not removed, and therefore
\(V(G_\infty)\subseteq W_{k+1}\).

Conversely, fix $v\in W_\infty$, and suppose, for contradiction, that $v$ fails $P$ in
$G[W_\infty]$.
Since $B_r(G,v)$ is finite, there exists a finite $k_0$ such that
all vertices in $B_r(G,v)\setminus W_\infty$ have been removed by
step $k_0$. Hence, for every $k\ge k_0$, the rooted marked
radius-$r$ neighbourhood of $v$ in $G[W_k]$ agrees with that in
$G[W_\infty]$.  Since $P$ is $r$-local, it follows that
\[
P(v;G[W_k],m|_{W_k})=0
\qquad\text{for every } k\ge k_0.
\]
Since every vertex $v$ occurs infinitely often in the sequence
$(x_k)_{k\ge0}$, if $k' \geq k_0$ is the first time when $x_{k'}=v$, the update rule
would remove $v$ at step $k'$. This contradicts $v\in W_\infty$. Hence every vertex of $G[W_\infty]$ must satisfy $P$, so
$G[W_\infty]$ is stable. By the maximality assertion proved above,
$W_\infty\subseteq V(G_\infty)$.
\end{proof}

\begin{remark}
Both the MND assumption and local finiteness are necessary for Lemma~\ref{lem:maximal-core}. For the necessity of MND, consider the path graph $1-2-3-4-5$, 
and let the cascading property be $P(v;G)=\mathbf{1}_{\{\deg_G(v)\le 1\}}$.
Then \(G_\infty\) contains only the vertices \(1\) and \(5\), whereas the maximal induced subgraph satisfying the cascading property contains the vertices \(1,2,4,\) and \(5\). 
For the necessity of local finiteness, consider the \emph{spider with growing arms}, 
described as follows: the vertex set is $V = \{o\} \cup \{v_{i,j} : i \in \mathbb{N},\, 
1 \leq j \leq i\}$ and the edge set is $E = \{\{o, v_{i,1}\} : i \geq 1\} \cup 
\{\{v_{i,j}, v_{i,j+1}\} : i \geq 1,\, 1 \leq j \leq i-1\}$, so that the $i$-th 
arm is a path of length $i$ emanating from $o$. In this graph, $o$ has infinite degree, 
violating local finiteness. Consider the cascade on this graph with property $P(v; G) = 
\mathbf{1}\{\deg_G(v) \geq 2\}$. 
Every $G[V_{t+1}]$ is isomorphic to $G[V_t]$, but nevertheless each $v_{ij}$ is eventually deleted. 
However, at every finite stage $t$, the center $o$ retains a neighbor in all arms of original length $> t$, and therefore $o$ lies in $\cap_{t\geq 0} V_t$.
On the other hand, there is no nonempty subset $H \subseteq V$ in which every vertex has induced degree at least $2$.

\end{remark}

\section{Main Results}

Let $((\mathcal{G}_n,m_n))_{n\ge1}$ be a sequence of finite random marked graphs with $|V(\mathcal{G}_n)|=n$, and let $o_n$ be a uniformly chosen vertex of $\mathcal{G}_n$, conditionally on $(\mathcal{G}_n,m_n)$. For $\mu\in\mathcal{P}(\Gm)$ and $(\mathcal{G},o,m)\sim\mu$, set
\[
\eta_R:=\mu(o\in V_R(\cG,m;P)),
\qquad
\eta:=\lim_{R\to\infty}\eta_R.
\]
By construction, $V_{R+1}(\mathcal{G},m;P)\subseteq V_R(\mathcal{G},m;P)$, 
for every vertex property $P$. Hence $(\eta_R)_{R\ge0}$ is
non-increasing, and the limit defining $\eta$ always exists.
Although $\eta$ depends on $\mu$ and $P$, we suppress this dependence in the notation.
Denote the final surviving vertex set in $(\mathcal{G}_n,m_n)$ by
\(
\mathcal{CS}_{P,n}:=V\bigl(G_{\infty}(\mathcal{G}_n,m_n;P)\bigr),
\)
and write $\mathcal{C}_{P,n,(1)},\mathcal{C}_{P,n,(2)},\ldots$ for the
vertex sets of the connected components of
$\mathcal{G}_n[\mathcal{CS}_{P,n}]$, ordered by decreasing size. Ties are
resolved by a fixed measurable rule, and $\mathcal C_{P,n,(i)}:=\varnothing$
when there are fewer than $i$ components.
\begin{assumption}\label{assumption-MLWC}
Throughout, $((\cG_n,m_n))_{n\ge1}$ converges to
$(\cG,o,m)\sim\mu\in\mathcal P(\Gm)$ in marked local--weak convergence in
probability. We additionally assume that, for every fixed $R$, the local event
\begin{equation}\label{eq:continuity-set}
    E_R^P:=\{(G,o,m):o\in V_R(G,m;P)\} \text{ is a $\mu$-continuity set.}
\end{equation}
\end{assumption}
For finite mark spaces, \eqref{eq:continuity-set} always holds. For continuous type spaces, the condition amounts to requiring that the limiting rooted marked graph puts no mass on the boundary of the
local decision rule. For example, if $P(v;G,m)=\mathbf 1_{\{m(v)>a\}}$, then \eqref{eq:continuity-set} holds whenever $\mu(m(o)=a)=0$, and it can fail if $\mu(m(o)=a)>0$.

\begin{theorem}[Universal upper bound]\label{thm:upper-bound}
 Under Assumption~\ref{assumption-MLWC}, for every $\varepsilon>0$,
\begin{equation}\label{eq:upper-bound-core}
\lim_{n\to \infty}\prob\left(\frac{|\mathcal{CS}_{P,n}|}{n}\ge\eta+\varepsilon\right) = 0.
\end{equation}
\end{theorem}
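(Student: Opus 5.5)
The proof compares the final core with the survivors of a fixed finite number of deletion rounds. Since $V_{t+1}\subseteq V_t$ for every $t$, the final core satisfies $\mathcal{CS}_{P,n}\subseteq V_R(\mathcal{G}_n,m_n;P)$ for every fixed $R$. It therefore suffices to show that, for fixed $R$,
\[
\frac{|V_R(\mathcal{G}_n,m_n;P)|}{n}\xrightarrow{\scriptscriptstyle\prob}\eta_R .
\]
Given $\varepsilon>0$, we choose $R$ with $\eta_R\le \eta+\varepsilon/2$, which is possible because $\eta_R\downarrow\eta$. Then
\[
\prob\bigl(|\mathcal{CS}_{P,n}|/n\ge \eta+\varepsilon\bigr)\le \prob\bigl(|V_R(\mathcal{G}_n,m_n;P)|/n\ge \eta_R+\varepsilon/2\bigr)\to 0 .
\]

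For the convergence at fixed $R$, we write the empirical survivor fraction as an integral against the empirical neighbourhood measure:
\[
\frac{|V_R(\mathcal{G}_n,m_n;P)|}{n}=\widehat{\mu}_{\mathcal{G}_n}(E_R^P).
\]
This identity needs one point of care. Membership of $v$ in $V_R(\mathcal{G}_n,m_n;P)$ is computed in the whole (possibly disconnected) graph. By Lemma~\ref{lem:finite-round-locality}, however, it depends only on the rooted marked radius-$rR$ ball of $v$, and that ball lies inside the connected component of $v$. Hence $v\in V_R(\mathcal{G}_n,m_n;P)$ exactly when $(\mathcal{G}_n,v,m_n)\in E_R^P$, with the rooted graph interpreted as the component of $v$ as in the paper's convention. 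Two further facts make the right-hand side well defined. First, $E_R^P$ is a well-defined subset of $\Gm$, since whether $o\in V_R$ is invariant under root- and mark-preserving isomorphisms. Second, $E_R^P$ is Borel, being the preimage under $\pi_{rR}$ of a measurable set of finite balls; this follows from measurability of $P$.

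By Assumption~\ref{assumption-MLWC}, $E_R^P$ is a $\mu$-continuity set. The stated equivalent form of marked local--weak convergence in probability then gives $\widehat{\mu}_{\mathcal{G}_n}(E_R^P)\xrightarrow{\scriptscriptstyle\prob}\mu(E_R^P)=\eta_R$, which completes the argument.

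The main obstacle is the locality identification: applying Lemma~\ref{lem:finite-round-locality} uniformly to both the finite graphs and the limit, checking that passing to connected components does not change round-$R$ survival, and confirming measurability of $E_R^P$ so that the continuity-set hypothesis applies. Everything else is monotonicity. In particular, no MND assumption is needed, because the inclusion $\mathcal{CS}_{P,n}\subseteq V_R$ holds for every property $P$, which is consistent with the word "universal" in the theorem's title.
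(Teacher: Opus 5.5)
Your proposal is correct and is essentially the paper's proof: you use the inclusion $\mathcal{CS}_{P,n}\subseteq V_R(\mathcal{G}_n,m_n;P)$, get $|V_R|/n\to\eta_R$ in probability from Lemma~\ref{lem:finite-round-locality} and the continuity-set assumption, and then choose $R$ with $\eta_R\le\eta+\varepsilon/2$. Your remarks on connected components and on the measurability of $E_R^P$ only spell out details that the paper leaves implicit.
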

The upper bound is sharp precisely when finite-round survival for large $R$ approximates membership in the true surviving set. We formulate this through the following almost-locality conditions.

\begin{condition}
\label{cond:core}
$\lim_{R\to\infty}\limsup_{n\to\infty}
\prob\bigl(o_n\in V_R(\cG_n,m_n,P)\setminus\mathcal{CS}_{P,n}\bigr)=0.$
\end{condition}

\begin{condition}
\label{cond:giant}
$\lim_{R\to\infty}\limsup_{n\to\infty}
\prob\bigl(o_n\in V_R(\cG_n,m_n,P)\setminus\mathcal{C}_{P,n,(1)}\bigr)=0.
$
\end{condition}

\begin{theorem}[Almost-locality and size of the surviving core]
\label{thm:almost-local}
Suppose that Assumption~\ref{assumption-MLWC} holds.
If Condition~\ref{cond:core} holds, then
\begin{equation}\label{eq:core-lln}
\frac{|\mathcal{CS}_{P,n}|}{n}
\xrightarrow{\scriptscriptstyle\prob}\eta.
\end{equation}
Conversely, Condition~\ref{cond:core} is necessary for
\eqref{eq:core-lln}. If Condition~\ref{cond:giant} holds, then
\begin{equation}\label{eq:giant-lln}
\frac{|\mathcal{C}_{P,n,(1)}|}{n}
\xrightarrow{\scriptscriptstyle\prob}\eta,
\qquad
\frac{|\mathcal{C}_{P,n,(i)}|}{n}
\xrightarrow{\scriptscriptstyle\prob}0
\quad\text{for every }i\ge2.
\end{equation}
Conversely, Condition~\ref{cond:giant} is necessary for the first
convergence in~\eqref{eq:giant-lln}.
\end{theorem}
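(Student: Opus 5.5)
The plan is to sandwich $|\mathcal{CS}_{P,n}|/n$ between the finite-round proportions, and to use Theorem~\ref{thm:upper-bound} for the upper side. The key deterministic fact is $\mathcal{CS}_{P,n}\subseteq V_R(\cG_n,m_n;P)$ for every $R$, and likewise $\mathcal C_{P,n,(1)}\subseteq V_R$.

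\textbf{Step 1: law of large numbers for finite rounds.} By Lemma~\ref{lem:finite-round-locality}, $o\in V_R$ is determined by the $rR$-ball. So $\mathbf 1_{E_R^P}$ is a bounded function of the local neighbourhood. By \eqref{eq:continuity-set} it is $\mu$-a.s.\ continuous. Marked local--weak convergence in probability therefore gives
\[
\frac{|V_R(\cG_n,m_n;P)|}{n}=\widehat\mu_{\cG_n}(E_R^P)\xrightarrow{\scriptscriptstyle\prob}\eta_R .
\]

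\textbf{Step 2: sufficiency of Condition~\ref{cond:core}.} Conditionally on the graph, $\prob(o_n\in V_R\setminus \mathcal{CS}_{P,n}\mid \cG_n)=|V_R\setminus\mathcal{CS}_{P,n}|/n$. Markov's inequality then gives
\[
\prob\bigl(|V_R\setminus \mathcal{CS}_{P,n}|/n>\delta\bigr)\le \delta^{-1}\prob(o_n\in V_R\setminus\mathcal{CS}_{P,n}).
\]
Hence $|\mathcal{CS}_{P,n}|/n\ge |V_R|/n-\delta$ with probability at least $1-\delta^{-1}\varepsilon_{R,n}$, where $\limsup_n\varepsilon_{R,n}\to0$ as $R\to\infty$. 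Combining with Step 1 and $\eta_R\ge\eta$ gives the lower bound $|\mathcal{CS}_{P,n}|/n\ge\eta-2\delta$ whp. Theorem~\ref{thm:upper-bound} supplies the matching upper bound. (Alternatively, the upper bound follows directly from $\mathcal{CS}_{P,n}\subseteq V_R$, Step 1, and $\eta_R\downarrow\eta$.)

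The same argument with $\mathcal C_{P,n,(1)}$ in place of $\mathcal{CS}_{P,n}$ gives $|\mathcal C_{P,n,(1)}|/n\to\eta$ under Condition~\ref{cond:giant}. For $i\ge2$ we have $|\mathcal C_{P,n,(i)}|\le|\mathcal{CS}_{P,n}|-|\mathcal C_{P,n,(1)}|$. This difference is at most $|V_R|-|\mathcal C_{P,n,(1)}|=|V_R\setminus \mathcal C_{P,n,(1)}|$, which is small by the Markov bound above.

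\textbf{Step 3: necessity.} Assume \eqref{eq:core-lln}. Since $\mathcal{CS}_{P,n}\subseteq V_R$,
\[
\prob(o_n\in V_R\setminus\mathcal{CS}_{P,n})=\mathbb E\Bigl[\frac{|V_R|}{n}-\frac{|\mathcal{CS}_{P,n}|}{n}\Bigr].
\]
The quantity inside the expectation is bounded by $1$ and converges in probability to $\eta_R-\eta$, by Step 1 and the hypothesis. Bounded convergence then gives $\limsup_n\prob(o_n\in V_R\setminus\mathcal{CS}_{P,n})=\eta_R-\eta$, which tends to $0$ as $R\to\infty$. The identical computation with $\mathcal C_{P,n,(1)}$ shows that the first convergence in \eqref{eq:giant-lln} implies Condition~\ref{cond:giant}.

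\textbf{Main obstacle.} The only delicate point is Step 1: justifying that $\widehat\mu_{\cG_n}(E_R^P)\to\eta_R$. This uses the portmanteau form of local--weak convergence for continuity sets, and it is exactly where Assumption~\ref{assumption-MLWC} is needed. Everything else is elementary inclusion plus Markov and bounded convergence.
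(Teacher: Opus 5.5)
Your proposal is correct and follows essentially the same route as the paper. Both arguments use the finite-round law of large numbers from Lemma~\ref{lem:finite-round-locality} and the continuity-set assumption, then the inclusion $\mathcal{CS}_{P,n}\subseteq V_R$ with Markov's inequality for sufficiency, and bounded convergence of $\mathbb{E}\bigl[(|V_R|-|\mathcal{CS}_{P,n}|)/n\bigr]\to\eta_R-\eta$ for necessity. The only minor variation is that you bound $|\mathcal C_{P,n,(i)}|$ for $i\ge2$ directly by $|V_R\setminus\mathcal C_{P,n,(1)}|$, rather than first deducing Condition~\ref{cond:core} from Condition~\ref{cond:giant}; this is equally valid.
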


Fix $s\ge1$ and let $Q$ be an arbitrary $s$-local property, evaluated in the original marked graph $(\mathcal G_n,m_n)$. For
$X\subseteq V(\mathcal G_n)$, let $\mathcal N_Q(X)$ be the set of vertices in
$X$ satisfying $Q$. Assume that, for every fixed $R$, the event
\begin{equation}\label{eq:continuity-set-property}
    E_R^{P,Q}:=\{(G,o,m):o\in V_R(G,m;P)\text{ and }o\text{ satisfies }Q\} \text{ is a $\mu$-continuity set.}
\end{equation}
Define
\(
\eta_R^Q:=\mu(E_R^{P,Q}),\
\eta^Q:=\lim_{R\to\infty}\eta_R^Q,
\)
where the limit exists because the events $E_R^{P,Q}$ decrease with $R$.

\begin{theorem}[Local statistics on the surviving core]
\label{thm:surviving-types}
Suppose Assumption~\ref{assumption-MLWC} and \eqref{eq:continuity-set-property} hold.
\begin{enumerate}
\item If \eqref{eq:core-lln} holds, then
\(
\frac{|\mathcal N_Q(\mathcal{CS}_{P,n})|}{n}
\xrightarrow{\scriptscriptstyle\prob}\eta^Q.
\)
\item If the first convergence in \eqref{eq:giant-lln} holds, then
\(
\frac{|\mathcal N_Q(\mathcal C_{P,n,(1)})|}{n}
\xrightarrow{\scriptscriptstyle\prob}\eta^Q.
\)
\end{enumerate}
\end{theorem}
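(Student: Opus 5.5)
The plan is to sandwich $\mathcal N_Q(\mathcal{CS}_{P,n})$ between two sets whose sizes we can control, using finite-round survival as a local proxy. Fix $R$, and write $V_R^n:=V_R(\mathcal G_n,m_n;P)$.

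\emph{Local term.} Since $\mathcal{CS}_{P,n}\subseteq V_R^n$, we have $\mathcal N_Q(\mathcal{CS}_{P,n})\subseteq \mathcal N_Q(V_R^n)$, and the latter set is
\[
\{v:(\mathcal G_n,v,m_n)\in E_R^{P,Q}\}.
\]
By Lemma~\ref{lem:finite-round-locality} and the $s$-locality of $Q$, membership in this set is determined by the radius-$\max(rR,s)$ neighbourhood of $v$. Hence $|\mathcal N_Q(V_R^n)|/n=\widehat\mu_{\mathcal G_n}(E_R^{P,Q})$. By \eqref{eq:continuity-set-property} and marked local--weak convergence in probability, this converges in probability to $\eta_R^Q$. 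The same argument with $E_R^P$ and Assumption~\ref{assumption-MLWC} gives $|V_R^n|/n\xrightarrow{\scriptscriptstyle\prob}\eta_R$.

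\emph{Error term.} The discrepancy is
\[
0\le |\mathcal N_Q(V_R^n)|-|\mathcal N_Q(\mathcal{CS}_{P,n})|\le |V_R^n\setminus\mathcal{CS}_{P,n}|=|V_R^n|-|\mathcal{CS}_{P,n}|.
\]
Under \eqref{eq:core-lln}, the right-hand side divided by $n$ converges in probability to $\eta_R-\eta$. We then split on the relevant event. For $\varepsilon>0$, choose $R$ large enough that $\eta_R-\eta<\varepsilon$ and $|\eta_R^Q-\eta^Q|<\varepsilon$; this is possible since both are monotone limits. Then, with probability tending to one,
\[
\left|\frac{|\mathcal N_Q(\mathcal{CS}_{P,n})|}{n}-\eta^Q\right|\le \left|\frac{|\mathcal N_Q(V_R^n)|}{n}-\eta_R^Q\right|+\varepsilon+\frac{|V_R^n|-|\mathcal{CS}_{P,n}|}{n}\le 4\varepsilon.
\]
This proves part 1.

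\emph{Part 2.} The argument is identical with $\mathcal C_{P,n,(1)}\subseteq \mathcal{CS}_{P,n}\subseteq V_R^n$ in place of $\mathcal{CS}_{P,n}$. The error term becomes $(|V_R^n|-|\mathcal C_{P,n,(1)}|)/n$, which tends in probability to $\eta_R-\eta$ by the first convergence in \eqref{eq:giant-lln}.

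The only point needing care is the identification of $|\mathcal N_Q(V_R^n)|/n$ with an empirical-measure probability of a fixed local event. This requires that $E_R^{P,Q}$ be a well-defined measurable subset of $\Gm$, invariant under rooted marked isomorphism. That holds because both conditions depend only on a finite rooted marked ball, and with the continuity-set hypothesis it makes the convergence immediate. Everything else is the monotone sandwich above, so I expect no real obstacle beyond this bookkeeping.
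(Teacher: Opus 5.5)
Your proof is correct and uses the same sandwich argument as the paper: you bound $|\mathcal N_Q(\mathcal{CS}_{P,n})|/n$ above by the local statistic $|\mathcal N_Q(V_R^n)|/n\xrightarrow{\scriptscriptstyle\prob}\eta_R^Q$ and control the gap by $|V_R^n\setminus\mathcal{CS}_{P,n}|/n$. The only difference is cosmetic: the paper controls the gap by citing the necessity direction of Theorem~\ref{thm:almost-local} (Condition~\ref{cond:core}), whereas you use directly that $(|V_R^n|-|\mathcal{CS}_{P,n}|)/n\xrightarrow{\scriptscriptstyle\prob}\eta_R-\eta$, which is exactly the computation behind that necessity argument.
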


\begin{remark}
    Theorem~\ref{thm:surviving-types} shows that once the density of the final $P$-core, or of its largest component, is known to converge to $\eta$, a wide array of convergence results follow. For example, related results are available for geometric graph models. Convergence of the proportion of vertices in the giant component has been established for random geometric graphs and soft random geometric graphs in~\cite{Penrose2003,Penrose2022}. Moreover, \cite{HHM23} established local--weak convergence for a broad class of geometric graph models, including sparse geometric inhomogeneous random graphs. Their proof can be adapted to retain information about spatial locations as marks.
    Consequently, for such models, the theorem yields convergence of proportion of vertices in that component whose spatial marks lie in a Borel set $A\subseteq\mathbb{R}^d$.
\end{remark}

\begin{remark}
Applications of this general framework are illustrated in the companion paper~\cite{ChatterjeeDharaSchulman2026}. In \cite{ChatterjeeDharaSchulman2026}, Condition~\ref{cond:core} is established for inhomogeneous random graph models and broad classes of cascading properties, consequently yielding concentration of the size of the surviving core.
\end{remark}

\section{Proofs}

\begin{proof}[Proof of \Cref{thm:upper-bound}]
For fixed $R$, set $A_R:= V_R(\cG_n,m_n,P),$ and $Z_R:=\frac{|A_R|}{n}$.
By \Cref{lem:finite-round-locality}, membership in $A_R$ is determined by a
finite rooted neighborhood. Assumption~\ref{assumption-MLWC} therefore gives
\[
Z_R\xrightarrow{\scriptscriptstyle\prob}\eta_R,
\]for each fixed $R$.
Moreover, $\mathcal{CS}_{P,n}\subseteq A_R$ for every $R$. Given
$\varepsilon>0$, choose $R$ so large that
\(
\eta_R\le\eta+\frac{\varepsilon}{2},
\)
which is possible since $\eta_R\downarrow\eta$, as $R\to\infty$. Then, as $n\to\infty$,
\[
\prob\left(\frac{|\mathcal{CS}_{P,n}|}{n}\ge\eta+\varepsilon\right)
\le \prob(Z_R\ge\eta+\varepsilon)\le \prob\left(Z_R\ge\eta_R+\frac{\varepsilon}{2}\right)
\to 0.
\]
\end{proof}

We next introduce a common notation for the two partitions used in the proof of \Cref{thm:almost-local}. For fixed $R$, write $A_R:= V_R(\cG_n,m_n,P)$. Consider either
\begin{enumerate}[label=(\roman*)]
    \item the \emph{core partition}, whose first block is $\mathcal{CS}_{P,n}$ and whose remaining blocks are the singleton vertices in $V(\cG_n)\setminus\mathcal{CS}_{P,n}$; or
    \item the \emph{component partition}, whose first blocks are the connected components of $\mathcal{CS}_{P,n}$, ordered by decreasing size, and whose remaining blocks are the singleton vertices in $V(\cG_n)\setminus\mathcal{CS}_{P,n}$.
\end{enumerate}
For the core partition, let $B_1:=\mathcal{CS}_{P,n}$. For the component
partition, let $B_1:=\mathcal C_{P,n,(1)}$; when the final core is empty, we
use the convention $B_1=\varnothing$ and regard all vertices as singleton
blocks. With this convention, write $\mathcal B=(B_i)_{i\ge1}$ for the
resulting block family and define
\[
D_{\mathcal{B}}(R):=|A_R\setminus B_1|,
\qquad
M_{\mathcal{B}}(R):=\#\{(u,v)\in A_R \times A_R:u,v\text{ lie in different nonempty blocks of }\mathcal{B}\}.
\]

\begin{lemma}[Partition estimate]\label{lem:partition-estimate}
For both partitions,
\[
|A_R|D_{\mathcal{B}}(R)-n
\le M_{\mathcal{B}}(R)
\le 2|A_R|D_{\mathcal{B}}(R).
\]
\end{lemma}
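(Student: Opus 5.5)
Because every block other than those inside $\mathcal{CS}_{P,n}$ is a singleton, I would prove both inequalities by a direct double count of ordered pairs in $A_R\times A_R$. The one structural fact needed is that $\mathcal{CS}_{P,n}\subseteq A_R$, since the sets $V_t$ decrease in $t$. Hence $B_1\subseteq A_R$, and the same holds for every block contained in the core.

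For each $u\in A_R$, let $B(u)$ be the block containing $u$. Then
\[
M_{\mathcal B}(R)=\sum_{u\in A_R}\bigl(|A_R|-|B(u)\cap A_R|\bigr)=|A_R|^2-\sum_{u\in A_R}|B(u)\cap A_R|.
\]
For the lower bound, I split the sum over $u\in A_R$ according to whether $u\in B_1$. For $u\notin B_1$, the block $B(u)$ is either a singleton or a non-largest core component, so $|B(u)\cap A_R|\le|B(u)|\le n$. I would rather drop the contribution of these $u$ entirely, which gives
\[
M_{\mathcal B}(R)\ge \sum_{u\in A_R\setminus B_1}\bigl(|A_R|-|B(u)|\bigr)\ge |A_R|D_{\mathcal B}(R)-\sum_{u\in A_R\setminus B_1}|B(u)|.
\]
The blocks meeting $A_R\setminus B_1$ are disjoint, so
\[
\sum_{u\in A_R\setminus B_1}|B(u)|=\sum_{B\ne B_1}|B\cap A_R|\,|B|\le \max_{B\neq B_1}|B|\cdot n .
\]
This is too weak for the component partition. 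The fix is to count only ordered pairs: the pairs $(u,v)$ with $u\in A_R\setminus B_1$ and $v$ in a different block number $\sum_{u\in A_R\setminus B_1}(|A_R|-|B(u)\cap A_R|)$. The subtracted term is $\sum_{B\ne B_1}|B\cap A_R|^2$. Since $|B|\le|B_1|$ for every $B\ne B_1$, pairs inside non-largest blocks can be reassigned: $\sum_{B\neq B_1}|B|^2\le |B_1|\sum_{B\ne B_1}|B|$. Alternatively, I would add the pairs $(u,v)$ with $u\in B_1$ and $v\in A_R\setminus B_1$, which number exactly $|B_1|D_{\mathcal B}(R)$. With this, a term $|A_R|D_{\mathcal B}(R)$ appears in full, minus the within-block pairs in the small blocks plus the cross terms. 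So I would aim for the bound $M\ge |A_R|D-\sum_{B\ne B_1}|B\cap A_R|^2+|B_1|D$ and then use $\sum_{B\ne B_1}|B|^2\le |B_1|D+n$.

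To justify that last inequality, I would write $\sum_{B\neq B_1}|B|^2\le \max_{B\ne B_1}|B|\cdot D$. If all non-largest blocks are singletons, the sum is at most $D\le n$. Otherwise every such block has size at most $|B_1|$, so the sum is at most $|B_1|D$. Taking the total bound $|B_1|D+n$ covers both cases, and the lower bound follows. For the core partition all blocks other than $B_1$ are singletons, so the subtracted term is just $D\le n$ and the lower bound is immediate.

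For the upper bound, every ordered pair counted in $M_{\mathcal B}(R)$ has at least one coordinate outside $B_1$. Otherwise both coordinates lie in $B_1$, which is a single block. The number of pairs in $A_R\times A_R$ with first coordinate in $A_R\setminus B_1$ is $|A_R|D_{\mathcal B}(R)$, and the same count holds for the second coordinate, so $M_{\mathcal B}(R)\le 2|A_R|D_{\mathcal B}(R)$.

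The degenerate case of an empty core needs a separate check. There $B_1=\varnothing$ and $D=|A_R|$, and all blocks are singletons, so $M=|A_R|^2-|A_R|$, which lies between $|A_R|^2-n$ and $2|A_R|^2$. The main obstacle is the lower bound for the component partition, specifically controlling the within-block pairs of the non-largest core components by $|B_1|D+n$ through the maximality of $B_1$.
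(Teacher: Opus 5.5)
Your proposal is correct and is essentially the paper's argument. It rests on the exact identity $M_{\mathcal B}(R)=(|A_R|+|B_1|)D_{\mathcal B}(R)-\sum_{B\neq B_1}|B\cap A_R|^2$, together with $|B\cap A_R|\le|B_1|$ from the maximality of $B_1$, a separate check of the empty core, and an upper bound whose union count $2|A_R|D_{\mathcal B}(R)-D_{\mathcal B}(R)^2$ is exactly the paper's $2|B_1|D_{\mathcal B}(R)+D_{\mathcal B}(R)^2$.

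One notational fix: in the step $\sum_{B\ne B_1}|B|^2\le\max_{B\ne B_1}|B|\cdot D_{\mathcal B}(R)$ you must write $|B\cap A_R|$ in place of $|B|$, since singleton blocks outside $A_R$ are not counted by $D_{\mathcal B}(R)$. With that fix you in fact get $M_{\mathcal B}(R)\ge|A_R|D_{\mathcal B}(R)$ whenever $B_1\neq\varnothing$, so the $-n$ is only needed for the empty core.
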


\begin{proof}
Write
\(
a:=|A_R|,\ b:=|B_1|,\ D:=D_{\mathcal B}(R).
\)
Since $B_1\subseteq\mathcal{CS}_{P,n}\subseteq A_R$, we have $a=b+D$.

For the core partition, every vertex of $A_R\setminus B_1$ is a singleton
block. Since $M_{\mathcal B}(R)$ counts ordered pairs in different blocks,
\[
M_{\mathcal B}(R)=2bD+D(D-1)=(a+b-1)D.
\]
This is at most $2aD$. If $b\ge1$, it is at least $aD$; if $b=0$, then
$D=a$ and
\[
M_{\mathcal B}(R)=a(a-1)=aD-a\ge aD-n.
\]

For the component partition, first suppose the surviving core is nonempty, i.e.,
$b\ge1$. For $i\ge2$, set
\(
s_i:=|A_R\cap B_i|.
\)
For surviving-component blocks this equals $|B_i|$, while for singleton blocks
outside the final core it can be $0$ or $1$. Since the blocks partition
$A_R\setminus B_1$,
\(
\sum_{i\ge2}s_i=D.
\)
The exact ordered-pair count is
\[
M_{\mathcal B}(R)
=a^2-b^2-\sum_{i\ge2}s_i^2
=2bD+D^2-\sum_{i\ge2}s_i^2.
\]
Dropping the last term yields
\[
M_{\mathcal B}(R)\le2bD+D^2=(a+b)D\le2aD.
\]
Since $B_1$ is a largest surviving component and all other nonsurviving blocks
are singletons, $s_i\le b$ for every $i\ge2$. Therefore
\[
\sum_{i\ge2}s_i^2\le b\sum_{i\ge2}s_i=bD,
\]
and hence
\[
M_{\mathcal B}(R)\ge2bD+D^2-bD=(b+D)D=aD.
\]
If the surviving core is empty, all blocks are singletons, $D=a$, and
\[
M_{\mathcal B}(R)=a(a-1)\ge aD-n.
\]
Combining the cases proves the result.
\end{proof}

\begin{lemma}[Equivalent form of 
almost-locality]\label{lem:equivalence}
Assume that $\eta>0$. For either partition,
\[
\lim_{R\to\infty}\limsup_{n\to\infty}\frac1{n^2}\expec[M_{\mathcal{B}}(R)]=0
\quad\Longleftrightarrow\quad
\lim_{R\to\infty}\limsup_{n\to\infty}\frac1n\expec[D_{\mathcal{B}}(R)]=0.
\]
For the core partition and component partition, the second condition is equivalent to Condition~\ref{cond:core} and Condition~\ref{cond:giant}, respectively.
\end{lemma}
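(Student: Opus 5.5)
The plan is to take expectations in Lemma~\ref{lem:partition-estimate} after dividing by $n^2$, and to control $|A_R|/n$ from above and below. Since $|A_R|\le n$ always, the upper bound $M_{\mathcal B}(R)\le 2|A_R|D_{\mathcal B}(R)\le 2nD_{\mathcal B}(R)$ gives
\[
\frac{1}{n^2}\expec[M_{\mathcal B}(R)]\le \frac{2}{n}\expec[D_{\mathcal B}(R)].
\]
This settles the direction ``$\Leftarrow$'' directly, with no need for $\eta>0$.

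For ``$\Rightarrow$'' I will use the lower bound $M_{\mathcal B}(R)\ge |A_R|D_{\mathcal B}(R)-n$, which makes the term $n/n^2=1/n$ negligible. The difficulty is that $|A_R|$ may be small on some event, and this is where $\eta>0$ enters. By Lemma~\ref{lem:finite-round-locality} and Assumption~\ref{assumption-MLWC}, $|A_R|/n\xrightarrow{\scriptscriptstyle\prob}\eta_R\ge\eta$, exactly as in the proof of Theorem~\ref{thm:upper-bound}. On the event $F_n:=\{|A_R|\ge \eta n/2\}$ we have $D_{\mathcal B}(R)\le \frac{2}{\eta n}|A_R|D_{\mathcal B}(R)$. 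On the complement we use only $D_{\mathcal B}(R)\le n$. This gives
\[
\frac1n\expec[D_{\mathcal B}(R)]\le \frac{2}{\eta}\Bigl(\frac{1}{n^2}\expec[M_{\mathcal B}(R)]+\frac1n\Bigr)+\prob(F_n^c).
\]
Since $\eta_R\ge\eta>\eta/2$, we get $\prob(F_n^c)\to0$ for each fixed $R$. Taking $\limsup_n$ and then $R\to\infty$ gives the claim. The only point needing care is that the convergence in probability of $|A_R|/n$ holds for every fixed $R$, and this is supplied by the continuity-set assumption \eqref{eq:continuity-set}.

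For the identification with the Conditions, note that $o_n$ is uniform given $(\cG_n,m_n)$. Hence
\[
\frac1n\expec[D_{\mathcal B}(R)]=\expec\Bigl[\frac{|A_R\setminus B_1|}{n}\Bigr]=\prob(o_n\in A_R\setminus B_1).
\]
With $B_1=\mathcal{CS}_{P,n}$ this is exactly the probability in Condition~\ref{cond:core}. With $B_1=\mathcal C_{P,n,(1)}$ it is the probability in Condition~\ref{cond:giant}. The convention $B_1=\varnothing$ when the core is empty agrees with $\mathcal C_{P,n,(1)}=\varnothing$ in that case.

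I expect the main obstacle to be the ``$\Rightarrow$'' step. There the lower bound on $|A_R|$ must be available uniformly enough to pass through $\limsup_n$, and in the plan above this is handled by the event $F_n$.
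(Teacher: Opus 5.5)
Your proof is correct and follows essentially the paper's route: you combine Lemma~\ref{lem:partition-estimate} with $|A_R|/n\xrightarrow{\scriptscriptstyle\prob}\eta_R\ge\eta>0$, and you identify $\frac1n\expec[D_{\mathcal B}(R)]$ with $\prob(o_n\in A_R\setminus B_1)$. The only difference is in how $|A_R|/n$ is handled: the paper replaces it by $\eta_R$ inside the expectation via $L^1$ convergence, giving for each fixed $R$ the sandwich $\eta_R\limsup_{n}\frac1n\expec[D_{\mathcal B}(R)]\le\limsup_{n}\frac1{n^2}\expec[M_{\mathcal B}(R)]\le2\eta_R\limsup_{n}\frac1n\expec[D_{\mathcal B}(R)]$, whereas you use the cruder bound $|A_R|\le n$ for one direction and the truncation event $F_n$ for the other.
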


\begin{proof}
For fixed $R$, let $X_n:=|A_R|/n$ and $Y_n:=D_{\mathcal B}(R)/n$. Since
$X_n\xrightarrow{\scriptscriptstyle\prob}\eta_R$, $0\le X_n,Y_n\le1$, and
bounded convergence in probability implies $L^1$ convergence,
\[
\left|\expec[X_nY_n]-\eta_R\expec[Y_n]\right|
\le\expec|X_n-\eta_R|\longrightarrow0.
\] 
Using the upper and lower bounds in Lemma~\ref{lem:partition-estimate}, dividing by $n^2$ and taking $\limsup_{n\to\infty}$
therefore shows that, for every fixed $R$,
\[
\eta_R\limsup_{n\to\infty}\expec[Y_n]
\le
\limsup_{n\to\infty}\expec\left[\frac{M_{\mathcal B}(R)}{n^2}\right]
\le
2\eta_R\limsup_{n\to\infty}\expec[Y_n].
\]
Since $\eta_R\downarrow\eta>0$, the two
iterated limits vanish simultaneously. Finally,
\[
\expec\left[\frac{D_{\mathcal B}(R)}n\right]
=\prob(o_n\in A_R\setminus B_1),
\]
which gives the stated equivalence with Conditions~\ref{cond:core} and
\ref{cond:giant}.
\end{proof}
For random variables $(\Delta_{R,n})_{R,n\ge1}$, write
$\Delta_{R,n}=o_{R,n,\mathbb P}(1)$ if, for every $\varepsilon>0$,
\[
\lim_{R\to\infty}\limsup_{n\to\infty}
\prob(|\Delta_{R,n}|>\varepsilon)=0.
\]


\begin{lemma}[Sum-of-squares estimate]\label{lem:sum-squares}
Assume $\eta>0$. Under Condition~\ref{cond:core} for the core partition, and under Condition~\ref{cond:giant} for the component partition,
\[
\frac1{n^2}\sum_{i\ge1}|B_i|^2\1_{\{B_i\subseteq A_R\}}
=
\eta^2+o_{R,n,{\sss \PR}}(1).
\]
\end{lemma}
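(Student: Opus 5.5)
The plan is to compute the sum of squares exactly in terms of the quantities already controlled in Lemma~\ref{lem:partition-estimate}, and then show that each piece is $o_{R,n,\PR}(1)$ away from its limiting value. Every block $B_i$ is either a subset of $\mathcal{CS}_{P,n}\subseteq A_R$, since surviving blocks lie in $A_R$ for every $R$, or a singleton outside the core. Hence the blocks contained in $A_R$ partition $A_R$. Counting ordered pairs in $A_R\times A_R$ then gives the exact identity
\[
\sum_{i\ge1}|B_i|^2\1_{\{B_i\subseteq A_R\}} = |A_R|^2 - M_{\mathcal B}(R).
\]
This holds for both partitions, including the degenerate case where the core is empty and all blocks are singletons. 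Dividing by $n^2$ reduces the claim to two facts: $|A_R|^2/n^2=\eta^2+o_{R,n,\PR}(1)$ and $M_{\mathcal B}(R)/n^2=o_{R,n,\PR}(1)$.

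For the first fact, I would use the observation from the proof of \Cref{thm:upper-bound} that $Z_R=|A_R|/n\xrightarrow{\PR}\eta_R$ for each fixed $R$. This relies on \Cref{lem:finite-round-locality} and the continuity-set assumption in Assumption~\ref{assumption-MLWC}. It gives $Z_R^2\xrightarrow{\PR}\eta_R^2$. Since $\eta_R\downarrow\eta$, for any $\varepsilon>0$ one can choose $R_0$ with $|\eta_R^2-\eta^2|<\varepsilon/2$ for $R\ge R_0$. Then $\limsup_n\PR(|Z_R^2-\eta^2|>\varepsilon)\le\limsup_n\PR(|Z_R^2-\eta_R^2|>\varepsilon/2)=0$ for all such $R$, which is exactly $Z_R^2-\eta^2=o_{R,n,\PR}(1)$.

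For the second fact, I would invoke Lemma~\ref{lem:equivalence}. Since $\eta>0$, Condition~\ref{cond:core} for the core partition, or Condition~\ref{cond:giant} for the component partition, is equivalent to $\lim_{R}\limsup_n n^{-2}\expec[M_{\mathcal B}(R)]=0$. Because $M_{\mathcal B}(R)\ge0$, Markov's inequality gives $\PR(M_{\mathcal B}(R)/n^2>\varepsilon)\le \varepsilon^{-1}n^{-2}\expec[M_{\mathcal B}(R)]$, and the iterated limit of the right side vanishes. Alternatively, one can bypass Lemma~\ref{lem:equivalence} by using the upper bound $M_{\mathcal B}(R)\le 2|A_R|D_{\mathcal B}(R)\le 2nD_{\mathcal B}(R)$ together with $\expec[D_{\mathcal B}(R)/n]=\PR(o_n\in A_R\setminus B_1)$. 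Combining the two facts yields the statement.

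The main obstacle is getting the exact identity right: one must check that the indicator $\1_{\{B_i\subseteq A_R\}}$ selects precisely a partition of $A_R$. Singletons outside $A_R$ are excluded, and no surviving block straddles $A_R$ because surviving blocks lie inside $\mathcal{CS}_{P,n}\subseteq A_R$. The convention $B_1=\varnothing$ in the empty-core case needs a separate glance, since an empty block contributes $0$ and the identity still holds. Everything else is a routine combination of earlier results.
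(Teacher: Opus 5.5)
Your proposal is correct and follows the paper's proof. The paper uses the same identity $Z_R^2 = n^{-2}\sum_{i}|B_i|^2\mathbbm{1}_{\{B_i\subseteq A_R\}} + M_{\mathcal B}(R)/n^2$, which holds because membership in $A_R$ is constant on blocks; it controls $M_{\mathcal B}(R)/n^2$ via \Cref{lem:equivalence} and Markov's inequality, and uses $Z_R\to\eta_R\downarrow\eta$. Your alternative bound $M_{\mathcal B}(R)\le 2nD_{\mathcal B}(R)$ from Lemma~\ref{lem:partition-estimate} is a valid shortcut, and this step does not need $\eta>0$.
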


\begin{proof}
For both partitions, membership in $A_R$ is constant on each block.
Indeed, every block corresponding to a component of the final surviving
core is contained in $A_R$, since $\mathcal{CS}_{P,n}\subseteq A_R$.
All remaining blocks are singletons, and such a singleton may or may not
belong to $A_R$.
Hence
\[
Z_R:=\frac{|A_R|}{n}
=\frac1n\sum_{i\ge1}|B_i|\1_{\{B_i\subseteq A_R\}}.
\]
Squaring and separating ordered pairs according to whether they lie in the same
block gives
\[
Z_R^2
=\frac1{n^2}\sum_{i\ge1}|B_i|^2\1_{\{B_i\subseteq A_R\}}
+\frac{M_{\mathcal B}(R)}{n^2}.
\]
By \Cref{lem:equivalence} and Markov's inequality,
\(
\frac{M_{\mathcal B}(R)}{n^2}=o_{R,n,\mathbb P}(1).
\)
Also $Z_R\xrightarrow{\scriptscriptstyle\prob}\eta_R$ for fixed $R$ and
$\eta_R\downarrow\eta$. The conclusion follows.
\end{proof}
    

\begin{proof}[Proof of \Cref{thm:almost-local}]
For the core partition, set
\(
D_R^{\rm core}:=|A_R\setminus\mathcal{CS}_{P,n}|.
\)
Since $\mathcal{CS}_{P,n}\subseteq A_R$,
\[
\frac{|\mathcal{CS}_{P,n}|}{n}
=\frac{|A_R|}{n}-\frac{D_R^{\rm core}}n.
\]
Condition~\ref{cond:core} and Markov's inequality imply
$D_R^{\rm core}/n=o_{R,n,\mathbb P}(1)$. Combining this with
$|A_R|/n\xrightarrow{\scriptscriptstyle\prob}\eta_R$ for fixed $R$ and
$\eta_R\downarrow\eta$ proves \eqref{eq:core-lln}.

The same argument with
\(
D_R^{\rm giant}:=|A_R\setminus\mathcal C_{P,n,(1)}|
\)
proves the first convergence in \eqref{eq:giant-lln} under
Condition~\ref{cond:giant}. Since this condition implies
Condition~\ref{cond:core}, we also have
$|\mathcal{CS}_{P,n}|/n\xrightarrow{\scriptscriptstyle\prob}\eta$. Therefore
\[
\frac{|\mathcal C_{P,n,(2)}|}{n}
\le
\frac{|\mathcal{CS}_{P,n}|-|\mathcal C_{P,n,(1)}|}{n}
\xrightarrow{\scriptscriptstyle\prob}0,
\]
and the same holds for each fixed $i\ge2$.

For necessity in the core case, assume
$|\mathcal{CS}_{P,n}|/n\xrightarrow{\scriptscriptstyle\prob}\eta$. Since all
normalized variables are bounded by $1$, convergence in probability implies
convergence of expectations. For fixed $R$,
\[
\begin{aligned}
\prob(o_n\in A_R\setminus\mathcal{CS}_{P,n})
&=\expec\left[\frac{|A_R|-|\mathcal{CS}_{P,n}|}{n}\right]\to \eta_R-\eta.
\end{aligned}
\]
Letting $R\to\infty$ proves Condition~\ref{cond:core}, including when
$\eta=0$. Necessity for largest-components uses identical argument by replacing
$\mathcal{CS}_{P,n}$ by $\mathcal C_{P,n,(1)}$.
\end{proof}

\begin{proof}[Proof of \Cref{thm:surviving-types}]
We prove the first assertion; the second is identical. For fixed $R$, define
\[
Z_R^Q:=\frac1n\sum_{v\in V(\cG_n)}
\1_{\{v\in A_R,\ v\text{ satisfies }Q\}}.
\]
By the continuity assumption on $E_R^{P,Q}$ and marked local--weak convergence,
\(Z_R^Q\xrightarrow{\scriptscriptstyle\prob}\eta_R^Q.
\)
Moreover,
\[
0\le Z_R^Q-
\frac{|\mathcal N_Q(\mathcal{CS}_{P,n})|}{n}
\le
\frac{|A_R\setminus\mathcal{CS}_{P,n}|}{n}.
\]
By the necessity part of \Cref{thm:almost-local}, \eqref{eq:core-lln} implies
Condition~\ref{cond:core}; hence the right-hand side is
$o_{R,n,\mathbb P}(1)$. Letting first $n\to\infty$ and then $R\to\infty$
gives
\[
\frac{|\mathcal N_Q(\mathcal{CS}_{P,n})|}{n}
\xrightarrow{\scriptscriptstyle\prob}\eta^Q.
\]
For the largest component, use the necessity of Condition~\ref{cond:giant} and
replace $\mathcal{CS}_{P,n}$ by $\mathcal C_{P,n,(1)}$.
\end{proof}

\begingroup
\small
\setstretch{.8}
\bibliographystyle{abbrv}
\bibliography{refs}
\endgroup

\end{document}